\theoremstyle{plain}
\newtheorem{theorem}{Theorem}[section]
\theoremstyle{definition}
\numberwithin{equation}{section}
\newcommand{\R}{\mathbb{R}}
\newcommand{\abs}[2][]{#1\lvert #2 #1\rvert}
\newcommand{\FF}{{\mathcal S}}
\newcommand{\w}{w^{(d)}}
\title{Improved bound in the Benjamin and Lighthill conjecture}
\author{Evgeniy Lokharu}
\address{Department of Mathematics, Link\"oping University, SE-581 83 Link\"oping, Sweden}
\begin{document}
	
	\begin{abstract}
		
	The classical Benjamin and Lighthill conjecture about steady water waves states that the non-dimensional flow force constant of a solution is bounded by the corresponding constants of the supercritical and subcritical uniform streams respectively. These inequalities determine a parameter region that covers all steady motions. In fact not all points of the region determine a steady wave. In this note we prove a new and explicit lower bound for the flow force constant, which is asymptotically sharp in a certain sense. In particular, this recovers the well known inequality $F<2$ for the Froude number, while significantly reducing the parameter region supporting steady waves. 
		
	\end{abstract}
	
	\maketitle

	\section{Introduction} \label{s:introduction}

	We consider the classical problem for steady waves on the surface of an ideal fluid above a flat bottom. In non-dimensional variables, when the mass flux and the gravitational constant are scaled to $1$, every flow has two constants of motion, which are the total head $r$ and the flow-force $\FF$. In 1954 Benjamin and Lighthill \cite{Benjamin1954a} made a conjecture about the independent possible ranges of $r$ and $\FF$. Specifically, every steady flow, irrespective of amplitude or wavelength, is to realize a point $(r,\FF)$ within a certain region in $(r,\FF)$-plane. The latter is determined by all points $(r,\FF)$ for which 
	\begin{equation} \label{BLconj}
	\FF_-(r) \leq \FF \leq \FF_+(r),
	\end{equation}
	where $\FF_+(r)$ and $\FF_-(r)$ are the flow force constants corresponding to the subcritical and supercritical laminar flows respectively. The boundary of this region is a cusped curve representing all uniform streams (see Figure \ref{fig:BL}). This conjecture was verified by Benjamin for all irrotational Stokes waves in \cite{Benjamin95}. The left inequality in \eqref{BLconj} was obtained earlier by Keady \& Norbury \cite{Keady1975} (also for periodic wavetrains). Recently Kozlov and Kuznetsov \cite{Kozlov2009a,Kozlov2011a} proved \eqref{BLconj} for arbitrary solutions under weak regularity assumptions, provided the Bernoulli constant $r$ is close to it's critical value $r_c$; it was extended to the rotational setting in \cite{Kozlov2017a}, again for $r \approx r_c$; the latter condition guarantees that solutions are of small amplitude. The left inequality in \eqref{BLconj} for periodic waves with a favorable vorticity was obtained by Keady \& Norbury \cite{Keady1978}. It was recently proved in \cite{LokActa} that \eqref{BLconj} remains true for arbitrary waves with vorticity. In particular, all irrotational steady waves (not necessarily periodic) correspond to points in the region.
	
	As it was recently proved in \cite{KozLokhWheeler2020} that all solitary waves correspond to points on a part of lower boundary $\FF = \FF_-(r)$ (dashed curve in Figure \ref{fig:BL}). It is believed (but unproved) that the dashed curve ends with the point that determines the highest solitary wave. Anyway, the well known bound $F<2$ (see \cite{starr}) for the Froude number implies that no solitary waves presented on $\FF = \FF_-(r)$ for large $r$. Besides, there are no small-amplitude periodic waves with $\FF \sim \FF_-(r)$. Together this suggests that no steady waves are determined by points in a neighborhood of the lower boundary $\FF = \FF_-(r)$ when $r$ is large.
	
	In the present paper we prove the existence of a barrier, which is explicitly given by $\FF = \tfrac12 r^2$. This barrier intersects the lower boundary at a point with $F=2$ and is asymptotically close to the upper boundary $\FF = \FF_+(r)$, which shows that all steady waves, including waves of greatest height, correspond to a smaller part of the region (grey region in Figure \ref{fig:BL}) from the Benjamin and Lighthill conjecture. Our proof is of special interest by itself. It is based on the flow force function formulation, introduced recently in \cite{Basu2020}. This new reformulation is very similar to the problem for steady waves with positive constant vorticity. Our idea is essentially based on that similarity. It is known from \cite{Kozlov2015} that the Bernoulli constant for waves with positive vorticity is bounded from above; see also \cite{Lokharu2020} for the case of negative vorticity. Thus, using a similar argument we can show that the flow force function formulation (after a rescaling) admit an explicit upper bound for the Bernoulli constant, which is equivalent to the desired inequality $\FF > \tfrac12 r^2$.

\begin{figure}[!t]
	\centering%
	\includegraphics[scale=0.5]{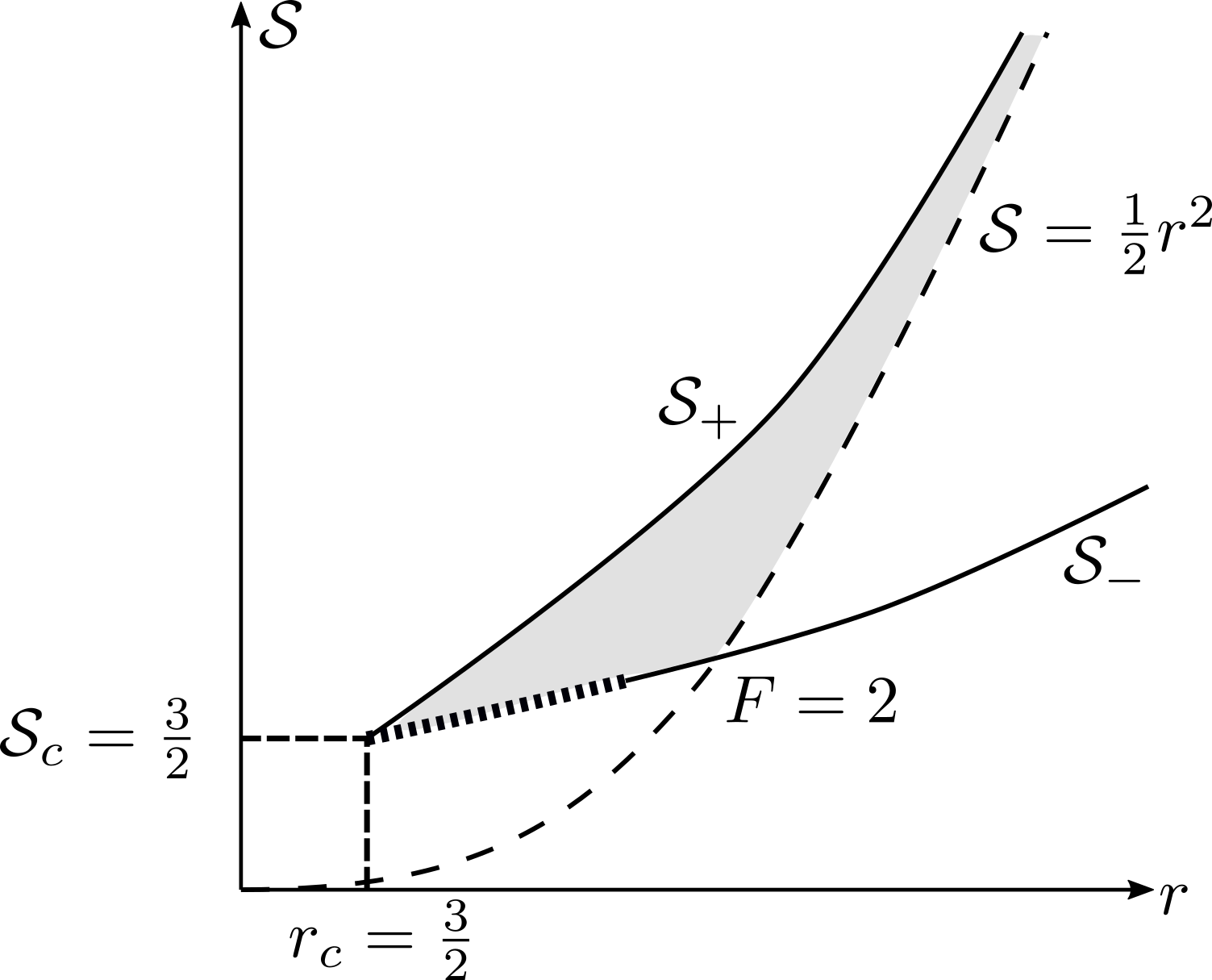}
	\caption{Parameter region for steady water waves}
	\label{fig:BL}
\end{figure}

	\section{Statement of the problem} \label{s:statement}
	
	A non-dimensional version of the stream function formulation for two-dimensional water waves has the following form (see \cite{Benjamin95}):
	
	\begin{subequations}\label{sys:stream}
	\begin{alignat}{2}
	\label{sys:stream:lap}
	\Delta\psi&=0 &\qquad& \text{for } 0 < y < \eta,\\
	\label{sys:stream:bern}
	\tfrac 12\abs{\nabla\psi}^2 +  y  &= r &\quad& \text{on }y=\eta,\\
	\label{sys:stream:kintop} 
	\psi  &= 1 &\quad& \text{on }y=\eta,\\
	\label{sys:stream:kinbot} 
	\psi  &= 0 &\quad& \text{on }y=0.
	\end{alignat}
	\end{subequations}	

	Here $r$ is referred to as the Bernoulli constant, problem's parameter. The latter problem admits another constant (see \cite{BENJAMIN1984} for more details), the flow force, defined as 
	\begin{equation} \label{flowforce}
		\FF = \int_0^{\eta}(\tfrac12(\psi_y^2 - \psi_x^2) - y + r )\, dy.
	\end{equation}	
	After taking $x$-derivative in \eqref{flowforce} and using \eqref{sys:stream:lap} together with the boundary relation \eqref{sys:stream:bern}, one verifies that $\FF$ is a constant of motion independent of $x$. Our main result can now be stated as follows.

	\begin{theorem}\label{thm:main}
		Let $(\psi,\eta) \in C^{2,\gamma}(\overline{D_\eta}) \times C^{2,\gamma}(\R)$ be a solution to \eqref{sys:stream} which is not a laminar flow. Then $\FF > \tfrac12 r^2$, where $r$ and $\FF$ are the Bernoulli and the flow force constants respectively, defined by \eqref{sys:stream:bern} and \eqref{flowforce}.
	\end{theorem}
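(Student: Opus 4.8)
The plan is to exploit the flow force function reformulation from \cite{Basu2020}, which the introduction promises will make the problem resemble a constant-positive-vorticity problem for which the Bernoulli constant is known to be bounded. Concretely, I would first show that the target inequality $\FF > \tfrac12 r^2$ is \emph{equivalent}, after an appropriate rescaling, to an upper bound on the Bernoulli constant of the reformulated problem. So the first step is to write down the flow force function formulation explicitly: introduce a new unknown (the flow force function) whose level sets replace the stream lines, derive the governing elliptic equation together with its Bernoulli-type boundary condition, and track how $r$ and $\FF$ enter as the two parameters. The key algebraic observation I would aim to verify is that the critical (laminar) value of the rescaled Bernoulli constant corresponds precisely to $\FF = \tfrac12 r^2$, so that strict inequality for non-laminar flows translates into a strict bound below the critical value.

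\medskip

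Second, I would establish the upper bound on the Bernoulli constant for the reformulated (constant-vorticity-like) problem, following the strategy of \cite{Kozlov2015}. The standard mechanism there is a flow force / integral-identity argument: one integrates the governing equation against a suitable test function (often the height function or a linear weight in the vertical variable), uses the boundary conditions to convert the bulk integral into boundary terms, and extracts a pointwise or integral inequality that caps the admissible Bernoulli constant. I expect the clean version to come from considering the maximum of the free surface (or of the flow force function's conjugate height) and comparing the flow at the crest with the laminar stream sharing the same parameters; concavity/monotonicity of the laminar dispersion relation then forces the strict separation.

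\medskip

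Third, I would handle the rigidity/strictness: the inequality must be \emph{strict} precisely because the solution is not laminar. Here I would invoke a maximum-principle or unique-continuation argument — equality in the derived integral identity should force the flow force function to be an affine function of height, i.e. a laminar flow, contradicting the hypothesis. This is the step where the $C^{2,\gamma}$ regularity is used, so that the strong maximum principle and Hopf lemma apply on $\overline{D_\eta}$.

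\medskip

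The main obstacle, I anticipate, is establishing the upper bound for the Bernoulli constant in the reformulated problem with full rigor: transplanting the \cite{Kozlov2015} argument requires that the flow force function formulation genuinely shares the structural features (sign of the effective vorticity, direction of the relevant monotonicity, correct boundary condition) that make that bound work, and these can fail or reverse under rescaling. I would therefore spend most of the effort checking that the reformulation produces an effective positive vorticity and that the rescaling is chosen so the critical parameter maps exactly to $\tfrac12 r^2$; once that correspondence is nailed down, the inequality $\FF > \tfrac12 r^2$ should follow from the borrowed a priori bound together with the non-laminar strictness argument.
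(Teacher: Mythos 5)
Your high-level architecture coincides with the paper's: pass to the flow force function $F$ of \eqref{fff}, rescale by $\sqrt{\FF}$ so that the ``flux'' is $1$, and observe that $\FF > \tfrac12 r^2$ is exactly the statement $R := r\FF^{-1/2} < \sqrt{2}$, where $\sqrt{2} = R_0$ is the value of the laminar dispersion relation $R(d) = \tfrac{1}{2d} + \tfrac{3d}{4}$ at the depth $d_0 = \sqrt{2}$ where the rescaled stream solution $U(Y;d) = -\tfrac12 Y^2 + (d^{-1} + \tfrac12 d)Y$ develops a stagnation point at the top. (One caution: this $R_0$ is the stagnation value, not the critical minimum $R_c = \sqrt{3/2}$ of $R(d)$; your phrase ``critical (laminar) value'' conflates the two, and it is the former that encodes $\FF = \tfrac12 r^2$.) However, your plan has a genuine gap at its foundation: nothing in it establishes the monotonicity $F_y \geq \psi_y^2 > 0$, which the paper proves by applying the maximum principle to the superharmonic function $\Phi = \tfrac12(\psi_x^2 + \psi_y^2) + y$ (so that $\Phi \leq r$ in the fluid domain). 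Without this inequality the ``conjugate height'' you invoke does not exist: the Dubreil--Jacotin change of variables $q = X$, $p = \bar F(X,Y)$ requires $\bar F_Y > 0$ throughout, and unlike the constant-vorticity literature, where unidirectionality is an assumption, here it must be \emph{derived} from the original problem \eqref{sys:stream}. This short step is the hinge of the whole reformulation and cannot be skipped.

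The second gap is the core bound itself, where you hedge between two mechanisms. Your first proposal --- an integral identity obtained by testing the equation against a weight --- points in the wrong direction: the bounds in the constant-vorticity setting you cite are themselves proved by comparison/maximum-principle arguments, not by integral identities, and no identity capping $R$ is readily available. Your second guess (compare with the laminar stream at the surface maximum) is the paper's actual route, but as stated it is incomplete in two respects. First, the comparison is run in hodograph variables: $w^{(d)} = h - H(\cdot;d)$ satisfies the homogeneous elliptic equation \eqref{eqn:w} on the fixed strip $\R \times (0,1)$, and one uses $H_p(1;d) \to +\infty$ as $d \to d_0$ to pick $d < d_0$ with $w^{(d)}_p < 0$ on $p=1$, whence $w^{(d)} < 0$ (maximum principle plus Hopf lemma) and $\zeta < d < d_0$. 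Second, since solutions are not assumed periodic, $\sup_\R \zeta = d_\star$ need not be attained, so one must extract a possibly unbounded sequence $q_j$ along which $\zeta(q_j) \to d_\star$, $w^{(d_\star)}_q(q_j,1) \to 0$ and $\lim_j w^{(d_\star)}_p(q_j,1) \geq 0$, and pass to the limit in the boundary condition \eqref{sys:h:top} to conclude $R \leq R(d_\star) < R_0$; your crest-based formulation silently assumes an attained maximum. Finally, your third step (rigidity via unique continuation at equality) is not how strictness arises: it comes for free from $d_\star < d_0$ together with the behavior of $R(d)$, so no equality-case analysis of an integral identity is needed.
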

	
	Let us compare the quantity $\FF_+(r)$ and the bound $\tfrac12 r^2$ from the theorem. Note that
	\[
	\FF_+(r) = \frac{1}{2 d_+(r)} - \tfrac12 d_+^2(r) + r d_+(r),
	\]
	where $d_+(r) > 1$ is a root of $\tfrac12 d^{-2} + d = r$. Thus, for large $r$, we find $d_+(r) \sim r$ and more precisely,
	\[
	d_+(r) = r - \frac{1}{2 r^2} + O(r^{-3}) \ \ \text{as} \ \ r \to +\infty.
	\]
	Therefore, we obtain
	\[
	\begin{split}
	\FF_+(r) & = \frac{1}{2r} \left(1 + O(r^{-3})\right) - \tfrac12 \left( r^2 - \frac{1}{r} + O(r^{-2}) \right) + r \left(r - \frac{1}{2r^2} + O(r^{-3})\right) \\
	& = \frac{r^2}{2} + \frac{1}{2r} + O(r^{-2}).
	\end{split}	
	\]
	We see that the curve $\FF = \tfrac12 r^2$ from Theorem \ref{thm:main} is below the upper boundary $\FF = \FF_+(r)$ (see Figure \ref{fig:BL}) for large $r$ and is asymptotically accurate. On the other hand, curves $\FF = \FF_-(r)$ and $\FF = \tfrac12 r^2$ have one point of intersection, for which the Froude number $F:=d^{-3/2} = 2$. This recovers the well known bound $F < 2$ for solitary waves; see \cite{starr, AmickToland81b, mcleod}.
	
	It is quite interesting to find that for large $r$ an arbitrary solution, including highest waves, must have the flow force constant close to $\FF_+(r)$. At this point we can make a hypothesis that all steady waves for large $r$ are of small-amplitude.
	
	\section{Proof of Theorem \ref{thm:main}} \label{s:proof}

	\subsection{Flow force function formulation} \label{s:proof:ff}	
	
	Based on the definition for the flow force constant \eqref{flowforce}, we introduce the corresponding flow force function
	\begin{equation} \label{fff}
		F(x,y) = \int_0^{y}(\tfrac12(\psi_y^2(x,y') - \psi_x^2(x,y')) - y' + r )\, dy'.
	\end{equation}
	Just as in \cite{Basu2020} we can reformulate the water wave problem in terms of the function $F$. It is straightforward to obtain
	\begin{equation} \label{fff:grad}
	F_x = \psi_x \psi_y, \ \ F_y = \tfrac12(\psi_y^2 - \psi_x^2) - y + r.
	\end{equation}
	Thus, we arrive to an equivalent formulation given by
	\begin{subequations}\label{sys:fff}
	\begin{alignat}{2}
	\label{sys:fff:lap}
	\Delta F + 1&=0 &\qquad& \text{for } 0 < y < \eta,\\
	\label{sys:fff:bern}
	\tfrac 12 (-\eta' F_x + F_y) +  y  &= r &\quad& \text{on }y=\eta,\\
	\label{sys:fff:kintop} 
	F  &= \FF &\quad& \text{on }y=\eta,\\
	\label{sys:fff:kinbot} 
	F  &= 0 &\quad& \text{on }y=0.
	\end{alignat}
	\end{subequations}
	It follows immediately from the maximum principle that $F > 0$ for $y>0$. In fact one can show that
	\begin{equation}\label{ff:uni}
	F_y \geq \psi_y^2 > 0 \ \ \text{for } 0 \leq y \leq \eta.
	\end{equation}
	To see that it is enough to apply the maximum principle to the superharmonic function $\Phi=\tfrac12(\psi_x^2 + \psi_y^2) + y$. It is straightforward that $\Phi_y = 0$ on $y$ and $\Phi = r$ on $y = \eta$, so that $\Phi \leq r$ everywhere. This yields \eqref{ff:uni}.
	
	In what follows we want to treat the system \eqref{sys:fff} as the water wave problem with constant vorticity. Thus, it is convenient to have the "mass flux" $F = \FF$ scaled to $1$. This suggests new variables
	\[
	X = \sqrt{\FF}^{-1} x, \ \ Y = \sqrt{\FF}^{-1} y, \ \ \zeta(X) = \sqrt{\FF}^{-1} \eta(x), \ \ \bar{F}(X,Y) = \FF^{-1} F(x,y).
	\]
	The scaled problems is
	\begin{subequations}\label{sys:ffbar}
	\begin{alignat}{2}
	\label{sys:ffbar:lap}
	\Delta \bar{F} + 1&=0 &\qquad& \text{for } 0 < Y < \zeta,\\
	\label{sys:ffbar:bern}
	\tfrac 12 (-\zeta_X \bar{F}_X + \bar{F}_Y) +  Y  &= R:=r \sqrt{\FF}^{-1} &\quad& \text{on }Y=\zeta,\\
	\label{sys:ffbar:kintop} 
	\bar{F}  &= 1 &\quad& \text{on }Y=\zeta,\\
	\label{sys:ffbar:kinbot} 
	\bar{F}  &= 0 &\quad& \text{on }Y=0.
	\end{alignat}
	Furthermore, in view of \eqref{ff:uni}, we additionally have
	\begin{equation} \label{uni:ffbar}
		F_Y > 0 \ \ \text{for} \ \ 0 \leq  Y \leq \zeta.
	\end{equation}	
	\end{subequations}	
	We are going to prove certain bounds for the "Bernoulli constant" $R$ in \eqref{sys:ffbar:bern}. Note that the system \eqref{sys:ffbar} is very similar to the stream function formulation of the water problem with constant vorticity, for which the desired bounds were obtained in \cite{Kozlov2015}. Thus, a similar argument can be applied here and we adapt it below.
		
	\subsection{Stream solutions} \label{s:stream}
	
	In order to obtain bounds for $R$ we need to study stream solutions to \eqref{sys:ffbar}. These are pairs $\bar{F} = U(Y;d), \ \zeta(X) = d$, parametrized by a constant "depth" $d > 0$. Using this ansats in \eqref{sys:ffbar}, one finds
	\[
	U(Y;d) = - \tfrac12 Y^2 + (d^{-1} + \tfrac12 d) Y.
	\]
	The corresponding Bernoulli constant is given by
	\[
	R(d) = \frac{1}{2d} + \frac{3d}{4}.
	\]
	We are interested in unidirectional solutions only, for which $U_Y > 0$ on $[0,d]$. As a result we obtain a restriction on $d$:
	\[
	0 < d < d_0 = \sqrt{2}.
	\]
	This critical value is characterized by the relation $U_Y(d_0;d_0) = 0$, while $U_Y(Y;d_0) > 0$ for $Y \in [0,d_0)$. Let us denote 
	\[
	R_0 = R(d_0) = \sqrt{2} \ \ \text{and} \ \ R_c = \sqrt{\frac32}, \ \ d_c = \sqrt{\frac23}.
	\]
	Note that $R_c = R(d_c)$ is the global minimum of $R(d)$. Thus, for any $R \in (R_c,R_0)$ there are two solutions $d = d_-(R)$ and $d = R_+(d)$ with $d_-(R) < d_+(R)$ to the equation
	\[
	R = R(d).
	\]	
	These depths are similar to the subcritical and supercritical depths of conjugate laminar flows of the original water wave problem.
	\subsection{Bounds for the Bernoulli constant} \label{s:bounds}
	Our aim is to prove the following theorem:
	\begin{theorem}\label{thm:bounds}
		Let $(\bar{F},\zeta)$ be an arbitrary non-trivial (other than a stream solution) solution to \eqref{sys:ffbar}. Then the corresponding Bernoulli constant $R$ is subject to the inequalities $R_c < R < R_0$.
	\end{theorem}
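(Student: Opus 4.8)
The plan is to exploit the feature, emphasized above, that \eqref{sys:ffbar} behaves like a constant–vorticity water wave problem, and to obtain the two–sided bound on $R$ by comparing an arbitrary solution with the explicit stream profiles $U(\,\cdot\,;d)$. The backbone is the observation that $\bar F_Y$ is harmonic: differentiating \eqref{sys:ffbar:lap} in $Y$ gives $\Delta\bar F_Y=0$, while \eqref{uni:ffbar} guarantees $\bar F_Y>0$ on $\overline D$. I would first record the two boundary identities that make the comparison possible. Differentiating $\bar F=1$ along $Y=\zeta$ gives $\bar F_X=-\zeta_X\bar F_Y$ there, so that \eqref{sys:ffbar:bern} collapses to the scalar relation
\[
\tfrac12(1+\zeta_X^2)\,\bar F_Y+\zeta=R\qquad\text{on }Y=\zeta ,
\]
equivalently $\bar F_Y=2(R-\zeta)/(1+\zeta_X^2)$; in particular $\zeta<R$ along the whole surface. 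On the bottom $\bar F\equiv0$ forces $\bar F_{XX}=0$, whence \eqref{sys:ffbar:lap} gives $\bar F_{YY}(X,0)=-1$, i.e. $(\bar F_Y)_Y=-1$ on $Y=0$. These are exactly the data met, with equality, by the stream profiles, for which $\bar F_Y+Y\equiv d^{-1}+\tfrac12 d$ is constant and $R=R(d)$.

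\emph{Upper bound (the crux).} Since $R=r\,\FF^{-1/2}$, the inequality $R<R_0=\sqrt2$ is precisely $\FF>\tfrac12 r^2$, so this is where the real work lies and it is equivalent to Theorem~\ref{thm:main}. I would pass to partial hodograph (Dubreil–Jacotin) variables $q=\bar F\in[0,1]$, writing the height $Y=h(X,q)$; since $\bar F_Y>0$ the map is a diffeomorphism with $h_q=1/\bar F_Y>0$, the fluid domain becomes the fixed strip $\{0<q<1\}$, \eqref{sys:ffbar:lap} turns into the quasilinear elliptic height equation, and the surface relation above becomes $\tfrac{1+h_X^2}{2h_q}+h=R$ at $q=1$. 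The stream solutions correspond to explicit profiles $H(q;d)$, which lie in the unidirectional class precisely for $d\in(0,d_0)$ and degenerate at $d=d_0$, where the surface velocity $U_Y(d_0;d_0)=0$ (equivalently $h_q\to\infty$ at $q=1$). The plan is then a sliding/touching comparison: one shows that $h$ is caught between stream profiles via the maximum principle and the Hopf lemma for the elliptic height equation, and that reaching $R=R_0$ would force the surface velocity to vanish somewhere, contradicting $\bar F_Y>0$ for a non–stream solution. Concretely I would track the harmonic function $\bar F_Y+Y$, which is constant on each stream, confine its boundary extrema by the maximum principle, and combine this with the surface identity at a highest surface point to exclude $R\ge R_0$.

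\emph{Lower bound $R>R_c$.} Here $R_c=\min_d R(d)$ is attained at the unique critical depth $d_c$, with $R'(d_c)=0$ and $R''>0$, so $R$ is strictly convex. The idea is that a non–trivial solution (non–constant $\zeta$) cannot sit at the bottom of the curve $R(d)$: its surface oscillates between $\inf\zeta<\sup\zeta$, and by the comparison above the associated vertical profiles are seen to bracket two \emph{distinct} admissible stream depths carrying the same $R$, which by strict convexity of $R(d)$ is possible only for $R>R_c$. I expect this to run most cleanly through the conserved flow force of \eqref{sys:ffbar}, evaluated on the solution and on the stream solutions, so that the claim reduces to the shape of $R(d)$ and the uniqueness of its minimizer; the degenerate case $R=R_c$ then forces $h(X,q)\equiv H(q;d_c)$ by the strong maximum principle, contradicting non–triviality.

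\emph{Main obstacle.} The delicate point is the upper bound, precisely because the surface condition here is linear in $\bar F_Y$ (carrying $h_q$ to the first power, unlike the quadratic $h_q^2$ of the genuine water–wave problem), so the constant–vorticity estimates of \cite{Kozlov2015} do not transfer verbatim and must be re–derived for this modified Bernoulli law. Making the touching argument rigorous for arbitrary — not necessarily periodic or symmetric — solutions, where the crest and trough are replaced by $\sup\zeta$ and $\inf\zeta$ and may only be approached along a sequence, will require a careful maximum–principle and limiting argument on the fixed strip, together with the Hopf lemma to upgrade the non–strict comparison to the strict inequalities $R_c<R<R_0$.
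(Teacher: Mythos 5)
Your proposal is correct and follows essentially the same route as the paper: the Dubreil--Jacotin transform of \eqref{sys:ffbar} to the fixed strip, a maximum-principle/Hopf comparison of the height function $h$ with the stream heights $H(\cdot\,;d)$ (whose degeneration $H_p(1;d)\to\infty$ as $d\to d_0$ is what traps $\zeta$ strictly below an admissible depth), and a limiting touching argument along sequences realizing $\sup\zeta$ and $\inf\zeta$, which yields $R\le R(d_\star)<R_0$ and $R\ge R(d_\dagger)\ge R_c$, with strictness coming from non-triviality and the shape of $R(d)$. The extra devices you float (tracking the harmonic function $\bar F_Y+Y$, a ``conserved flow force'' for the lower bound) are dispensable, since the strip comparison alone closes both bounds exactly as in the paper.
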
	
	
	Note that the statement of Theorem \ref{thm:main} follows directly from the upper bound $R < R_0$. Indeed, a non-trivial solution $(\psi,\eta)$ of the original system \eqref{sys:stream} with the Bernoulli constant $r$ generates a solution to \eqref{sys:ffbar} with $R = r\sqrt{\FF}^{-1}$. Now Theorem \ref{thm:bounds} gives $r\sqrt{\FF}^{-1} < R_0 = \sqrt{2}$, which is equivalent to $\FF > \tfrac12 r^2$ as stated in Theorem \ref{thm:main}.

	\begin{proof}[Proof of Theorem \ref{thm:bounds}]
		Our argument is based on a comparison of a given solution $(\bar{F},\zeta)$ to different stream solutions from Section \ref{s:stream}. For this purpose we will apply the partial hodograph transformation introduced by Dubreil-Jacotin \cite{DubreilJacotin34} but for the flow force function formulation \eqref{sys:ffbar}. This is possible since $\bar{F}_Y > 0$ everywhere by \eqref{ff:uni}. Thus, we introduce new independent variables
		\[
		q = X, \ \ p = \bar{F}(X,Y),
		\]
		while new unknown function $h(q,p)$ (height function) is defined from the identity
		\[
		h(q,p) = y.
		\]
		Note that it is related to the flow force function $\bar{F}$ through the formulas
		\begin{equation} \label{height:stream}
		\bar{F}_X = - \frac{h_q}{h_p}, \ \ \bar{F}_Y = \frac{1}{h_p},
		\end{equation}
		where 
		\begin{subequations}\label{sys:h}
			\begin{equation} \label{sys:h:uni}
			h_p > 0
			\end{equation}
			throughout new fixed domain $S = \R \times (0,1)$. An equivalent problem for $h(q,p)$ is given by
			\begin{alignat}{2}
			\label{sys:h:main}
			\left( \frac{1+h_q^2}{2h_p^2} + p \right)_p - \left(\frac{h_q}{h_p}\right)_q &=0 &\qquad& \text{in } S,\\
			\label{sys:h:top}
			\frac{1+h_q^2}{2h_p} +  h  &= R &\quad& \text{on }p=1,\\
			\label{sys:h:bot} 
			h  &= 0 &\quad& \text{on }p=0.
			\end{alignat}
		\end{subequations}
		The surface profile $\zeta$ becomes the boundary value of $h$ on $p = 1$: 
		\[
		h(q,1) = \zeta(q), \ \ q \in \R.
		\]
		For a detailed derivation of \eqref{sys:h} we refer to \cite{Basu2020}. Applying a similar transformation for  stream functions $U(Y;d)$, we obtain the corresponding height functions $H(p;d)$, subject to
		\[
		\left(\frac{1}{2H_p^2} + p\right)_p = 0, \ \ H(0) = 0, \ \ H(1) = d, \ \ \frac{1}{2 H_p(1;d)} + d = R(d).
		\]		
		Now because the domain for $h$ and $H$ is the same, we can compare these functions using the maximum principle. More precisely, we put
		\[
		w^{(d)} = h - H
		\]
		and using the corresponding equations for $h$ and $H$ one finds that $\w$ solves a homogeneous elliptic equation
		\begin{equation}\label{eqn:w}
			\frac{1+h_q^2}{h_p^2} \w_{pp} - 2\frac{h_q}{h_p} \w_{qp} + \w_{qq} - \w_p + \frac{(\w_q)^2 H_{pp}}{h_p^2} - \frac{\w_p (h_p + H_p) H_{pp}}{h_p^2 H_p^2} = 0.
		\end{equation}
		Thus, every $\w$ is subject to a maximum principle; see \cite{Vitolo2007} for an elliptic maximum principle in unbounded domains.
		
		Now we can prove Theorem \ref{thm:bounds}. Note that when $d \to d_0 = \sqrt{2}$ we have $H_p(1;d) \to +\infty$. Thus, we can choose $d < \sqrt{2}$ for which $\w_p < 0$ everywhere on $p=1$. Therefore, $\w < 0$ by the maximum principle, because otherwise we would obtain a contradiction with the Hopf lemma. Thus, we have $\zeta(q) < d$ for all $q \in \R$. By the continuity we can find $d_\star < d <  d_0$ such that $\sup_{\R} \zeta = d_\star$. In this case we can always find a sequence $q_j$, possibly unbounded, such that
		\[
		\lim_{j \to +\infty} \zeta(q_j) = d_\star, \ \ 	\lim_{j \to +\infty} w_q^{(d_\star)}(q_j,1) = 0, \ \ \lim_{j \to +\infty} w_p^{(d_\star)}(q_j,1) \geq 0.
		\]
		Taking the corresponding limit in \eqref{sys:h:top} and using relations from above we obtain $R \leq R(d_\star) < R_0$. The bottom inequality $R>R_c$ can be proved just the same way by choosing $d_\dagger = \inf_{\R} \zeta$ and repeating a similar argument for $w^{(d_\dagger)}$ (note that $\zeta$ is separated from zero).
	\end{proof}
	\bibliographystyle{siam}
	\bibliography{bibliography}
\end{document}